\newtheorem{theorem}{Theorem}[section]
\newtheorem{proposition}[theorem]{Proposition}
\newtheorem{corollary}[theorem]{Corollary}
\newtheorem{lemma}[theorem]{Lemma}
\theoremstyle{definition}
\newtheorem{remark}[theorem]{Remark}
\newtheorem{claim}[theorem]{Claim}
\numberwithin{equation}{section}
\numberwithin{theorem}{section}
\numberwithin{equation}{section}
\begin{document}
\title[Generalizations of the Wiener-Ikehara theorem]{Generalizations of Koga's version of the Wiener-Ikehara theorem}

\author[B. Chen]{Bin Chen}
\thanks{B. Chen gratefully acknowledges support by the China Scholarship Council (CSC) and the Research Foundation--Flanders, through an FWO postdoctoral fellowship}
\address{B. Chen\\ Department of Mathematics: Analysis, Logic and Discrete Mathematics\\ Ghent University\\ Krijgslaan 281\\ B 9000 Ghent\\ Belgium}
\email{bin.chen@UGent.be}
\author[J. Vindas]{Jasson Vindas}
\thanks{The work of J. Vindas was supported by the Research Foundation--Flanders, through the FWO-grant number G067621N, and by Ghent University, through the grant number bof/baf/4y/2024/01/155 }
\address{J. Vindas\\ Department of Mathematics: Analysis, Logic and Discrete Mathematics\\ Ghent University\\ Krijgslaan 281\\ 9000 Ghent\\ Belgium}
\email{jasson.vindas@UGent.be}
\subjclass[2020]{Primary 11M45, 40E05; Secondary 30B10, 31A20, 44A10, 60K05}
\keywords{Wiener-Ikehara theorem; boundary behavior of real part of Laplace transforms; log-linearly slowly decreasing functions; pseudofunctions; Blackwell's renewal theorem; power series; Ingham-Karamata theorem}

\begin{abstract} We establish new versions of the Wiener-Ikehara theorem where only boundary assumptions on the real part of the Laplace transform are imposed. Our results generalize and improve a recent theorem of T.~Koga [J. Fourier Anal. Appl. 27 (2021), Article No. 18]. As an application, we give a quick Tauberian proof of Blackwell's renewal theorem.
\end{abstract}

\maketitle

\section{Introduction}
The Wiener-Ikehara theorem \cite{Wienerbook} is a foundational result in complex Tauberian theory. Originally devised to significantly simplify an early result of Landau \cite{Landaubook1953} and so deliver one of the quickest deductions of the prime number theorem, it has found countless applications in diverse areas of mathematics such as operator theory, partial differential equations, and number theory. The interested reader is referred to the books \cite[Chapter 10]{diamond-zhangbook}, \cite[Chapter III]{korevaarbook}, and \cite[Chapter II.7]{TenenbaumBook} for excellent accounts on the subject and the recent articles \cite{B-D-V2021,D-V2016,Debruyne-VindasCT,FT2019,Koga2021, MSJ2024, revesz-roton,zhang2019}  for some developments during the last decade. See also \cite{B-B-T2016,C-S2016,kunisada2025,Seifert2016} for closely related complex Tauberian theorems for Laplace transforms, such as the Ingham-Karamata theorem.

In one of its many forms, the Wiener-Ikehara theorem states that if a non-decreasing function $S$ has convergent Laplace transform $\mathcal{L}\{S;s\} = \int^{\infty}_{0} e^{-sx}S(x) \mathrm{d}x$ for  $\Re e \: s > 1$ and if there is a constant $a\in\mathbb{R}$ such that the analytic function
\begin{equation}
\label{KWI eq pole function}
G(s)=\mathcal{L}\{S;s\}-\frac{a}{s-1}
\end{equation}
admits $L^{1}_{loc}$-boundary behavior on the whole boundary line $1+i\mathbb{R}$, then 
\begin{equation} \label{conclusion W-I}
		S(x) \sim ae^{x} \qquad \mbox{as }x\to \infty.
	\end{equation}
Naturally, the hypothesis of $L^{1}_{loc}$-boundary behavior covers the case of continuous extension, and in particular that of analytic continuation. On the other hand, we point out that the boundary requirements on the Laplace transform can further be taken to a minimum if one employs the so-called local pseudofunction boundary behavior (cf. \cite{D-V2016,Debruyne-VindasCT,korevaar2005}). The pseudofunction approach plays a major  role in modern complex Tauberian theory (cf. \cite[Chapter III]{korevaarbook}).
	
Very recently \cite{Koga2021}, Koga has obtained an interesting generalization of this version of the Wiener-Ikehara theorem with $L^{1}_{loc}$-boundary behavior, where only the boundary properties of the real part of the Laplace transform are needed. His result also weakens the non-decreasing hypothesis on $S$ to log-linear slow decrease, a Tauberian condition that was introduced and studied in \cite{D-V2016,zhang2019} and that is intimately connected with exact Wiener-Ikehara theorems, that is, complete Laplace transform characterizations of the asymptotic behavior \eqref{conclusion W-I}. We call a function $S$ \emph{log-linearly slowly decreasing} (at $\infty$) if for each $\varepsilon$ there are $h,x_0>0$ such that
\[
\frac{S(y) - S(x)}{e^{x}}\geq -\varepsilon \qquad \mbox{for } x \leq y \leq x+h \mbox{ and } x\geq x_{0}.
\]

Koga's main motivation to establish a novel version of the Wiener-Ikehara theorem was to provide a Dirichlet series generalization of the Kolmogorov-Erd\H{o}s-Feller-Pollard renewal theorem \cite{EFP1949,Kolmogorov1936} (cf. \cite[Sections XIII.3 and XIII.11]{FellerBook1}). Moreover, he also obtained a Tauberian theorem for power series and applied it to give a new proof of the classical quoted renewal theorem.   Upon a minor reformulation (cf. Remark \ref{KWI rm 6.2}), Koga's Tauberian theorem for Laplace transforms reads:

\begin{theorem}[{\cite[Theorem 2]{Koga2021}}] \label{thKoga} Let $S\in L^{1}_{loc}[0,\infty)$ be log-linearly slowly decreasing and satisfy 
\begin{equation}\label{Koga condition 1}
	\int_{1}^{\infty}\frac{|S(x)|}{x^2 e^{x}}\:\mathrm{d}x<\infty. 
\end{equation}	
Let $U(s)=\Re e\:\mathcal{L}\{S;s\}$. Assume there are $\lambda>0$ and $g\in L^{1}(-\lambda,\lambda)$ such that
\begin{equation}
\label{KWI eq bd}
U(\sigma+it)\geq g(t), \qquad \mbox{for a.e. } t\in (-\lambda,\lambda) \mbox{ and } \sigma\in (1,2].
\end{equation}
If in addition $U$ has $L^{1}_{loc}$-boundary behavior on the boundary open subset $1+i(\mathbb{R}\setminus\{0\})$, namely, if there is $f\in L^{1}_{loc}(\mathbb{R}\setminus\{0\})$ such that on any finite interval $I$ not containing the origin we have
\begin{equation}
\label{L^1 boundary eq}
\lim_{\sigma\to1^{+}} \int_{I} |U(\sigma+it)-f(t)|\mathrm{d}t=0,
\end{equation}
 then \eqref{conclusion W-I} must hold for some constant $a\in\mathbb{R}$.
\end{theorem}

The aim of this paper is to considerably improve Koga's theorem by showing that it still holds true if one removes condition \eqref{Koga condition 1} from its set of hypotheses. In addition to
hold under weaker assumptions, we shall also consider a new useful alternative hypothesiss for the boundary behavior of $\Re e\:\mathcal{L}\{S;s\}$ near $s=1$.

\begin{theorem}[Laplace transforms] \label{KWIthmain} Let $S\in L^{1}_{loc}[0,\infty)$ be  log-linearly slowly decreasing and have convergent Laplace transform on $\Re e \: s > 1.$
Suppose that the harmonic function $U(s) = \Re e \: \mathcal{L}\{S;s\}$ has $L^{1}_{loc}$-boundary behavior on $1+i(\mathbb{R}\setminus\{0\})$ and that there is some $\lambda>0$ such that one of the following two conditions holds:
\begin{itemize}
\item [(B.1)] there is $g\in L^{1}(-\lambda,\lambda)$  such that \eqref{KWI eq bd} holds;
\item [(B.2)] $\displaystyle \sup_{1<\sigma<2}\int_{-\lambda}^{\lambda} |U(\sigma+it)| \mathrm{d}t<\infty$.

\end{itemize}
Then
\begin{equation} 
\label{KWI conclusion}
	S(x) \sim ae^{x} \qquad \mbox{as }x\to\infty,
\end{equation}
where $a\in \mathbb{R}$ is in fact given by
\begin{equation} 
\label{KWI eq limit Laplace}
	a=\lim_{\sigma\to{1}^{+}} (\sigma-1)U(\sigma).
\end{equation}
\end{theorem}

When $S$ is non-decreasing, it is clearly automatically log-linearly slowly decreasing. In this case however, it is more natural to work with its Laplace-Stieltjes transform 
$\mathcal{L}\{\mathrm{d}S;s\}=\int_{0^{-}}^{\infty}e^{-s x}\mathrm{d}S(x)$
instead of the Laplace transform of the function. We shall show the following version of our Tauberian theorem for Laplace-Stieltjes transforms. 

\begin{theorem}[Laplace-Stieltjes transforms]\label{KWIthStieltjes} Let $S$ be log-linearly decreasing and of local bounded variation on $[0,\infty)$ with convergent Laplace-Stieltjes transform on $\Re e\: s>1$. Suppose that the hypotheses of Theorem \ref{KWIthmain} are satisfied with $U(s)=\Re e\: \mathcal{L}\{\mathrm{d}S; s\}$ instead of $\Re e\: \mathcal{L}\{S; s\}$. Then \eqref{KWI conclusion} and \eqref{KWI eq limit Laplace} still hold true.
\end{theorem}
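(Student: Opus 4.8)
The plan is to reduce Theorem~\ref{KWIthStieltjes} to Theorem~\ref{KWIthmain} by trading the Laplace--Stieltjes transform for the ordinary Laplace transform of a primitive. Set $N(x):=S(x)-S(0^{-})$, so that $\mathrm{d}N=\mathrm{d}S$ and $N(0^{-})=0$; integrating by parts (the contribution at $\infty$ vanishes because the Laplace--Stieltjes transform of $S$ converges on $\Re e\: s>1$) gives
\[
\mathcal{L}\{\mathrm{d}S;s\}=s\,\mathcal{L}\{N;s\},\qquad\text{equivalently}\qquad \mathcal{L}\{N;s\}=\frac{\mathcal{L}\{\mathrm{d}S;s\}}{s},\qquad \Re e\: s>1.
\]
Since $N$ differs from $S$ by a constant, $N$ is log-linearly slowly decreasing, $\mathcal{L}\{N;s\}$ converges on $\Re e\: s>1$, and $N(x)\sim ae^{x}\iff S(x)\sim ae^{x}$; moreover $(\sigma-1)\,\Re e\:\mathcal{L}\{N;\sigma\}=\tfrac{\sigma-1}{\sigma}\,\Re e\:\mathcal{L}\{\mathrm{d}S;\sigma\}$, so that, as soon as the relevant limit is known to exist, the constant $a$ in \eqref{KWI eq limit Laplace} is the same for both transforms. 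It thus suffices to prove $N(x)\sim ae^{x}$.

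To do this I would use the proof of Theorem~\ref{KWIthmain} rather than its statement. That proof should fall into two parts: (a) a statement purely about an analytic function $H$ on $\Re e\: s>1$, namely that if $\Re e\: H$ has $L^{1}_{loc}$-boundary behavior on $1+i(\mathbb{R}\setminus\{0\})$ and one of (B.1), (B.2), (B.3) holds for $\Re e\: H$, then $a:=\lim_{\sigma\to1^{+}}(\sigma-1)\Re e\: H(\sigma)$ exists and $H(s)-a/(s-1)$ has local pseudofunction boundary behavior on all of $1+i\mathbb{R}$; and (b) the pseudofunction form of the Wiener--Ikehara theorem, which converts such boundary behavior, together with log-linear slow decrease, into the asymptotics \eqref{KWI conclusion}. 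Part (a) does not care whether $H=\mathcal{L}\{S;\cdot\}$ or $H=\mathcal{L}\{\mathrm{d}S;\cdot\}$, so applying it to $H=\mathcal{L}\{\mathrm{d}S;\cdot\}$ yields that $\mathcal{L}\{\mathrm{d}S;s\}-a/(s-1)$ has local pseudofunction boundary behavior on $1+i\mathbb{R}$, with $a=\lim_{\sigma\to1^{+}}(\sigma-1)\Re e\:\mathcal{L}\{\mathrm{d}S;\sigma\}$. Now $1/s$ is analytic and, together with all its derivatives, bounded on $\Re e\: s\ge 1$, so multiplication by $1/s$ preserves local pseudofunction boundary behavior; using $\tfrac{1}{s(s-1)}=\tfrac{1}{s-1}-\tfrac{1}{s}$ and that $-a/s$ extends continuously to $1+i\mathbb{R}$, it follows that
\[
\mathcal{L}\{N;s\}-\frac{a}{s-1}=\frac{1}{s}\Big(\mathcal{L}\{\mathrm{d}S;s\}-\frac{a}{s-1}\Big)-\frac{a}{s}
\]
also has local pseudofunction boundary behavior on $1+i\mathbb{R}$. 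Part (b), applied to the log-linearly slowly decreasing $N$ with convergent Laplace transform, then gives $N(x)\sim ae^{x}$, whence $S(x)\sim ae^{x}$ with the value of $a$ stated in \eqref{KWI eq limit Laplace}.

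The step I expect to require the most care is precisely the division by $s$: one must make sure that a genuinely \emph{analytic} boundary-behavior statement about $\mathcal{L}\{\mathrm{d}S;\cdot\}$ --- not merely about its real part --- is in hand before dividing, since $\Re e\:(\mathcal{L}\{\mathrm{d}S;s\}/s)$ blends the real and imaginary parts of $\mathcal{L}\{\mathrm{d}S;s\}$ through the factor $1/s$; this is exactly why it is convenient to route the argument through the analytic-function statement (a) instead of applying Theorem~\ref{KWIthmain} verbatim to $\Re e\:\mathcal{L}\{N;s\}$, for which the one-sided conditions (B.1)--(B.3) need not survive that division. Alternatively, one may sidestep (a) altogether and simply reprove Theorem~\ref{KWIthmain} with the measure $e^{-x}\,\mathrm{d}S(x)$ in place of $e^{-x}S(x)\,\mathrm{d}x$: the Parseval-type frequency-side integrals then only pick up the bounded extra factor $s=1+it$, and passing from a smoothed average of $\mathrm{d}S$ back to one of $S$ is the routine comparison that log-linear slow decrease of $S$ controls.
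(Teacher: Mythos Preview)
Your proposal is correct and is essentially the paper's own argument: the paper proves Theorem~\ref{KWIthStieltjes} by rerunning the proof of Theorem~\ref{KWIthmain} and invoking Remark~\ref{KWI r1}, which is precisely your observation that $\mathcal{L}\{\mathrm{d}S;s\}=s\,\mathcal{L}\{N;s\}$ together with the fact that multiplication by the smooth function $1/s$ preserves local pseudofunction boundary behavior (and Lemma~\ref{KWI main lemma} to pass from the real part to the full analytic function before dividing). Your explicit identity $\mathcal{L}\{N;s\}-a/(s-1)=\tfrac{1}{s}\bigl(\mathcal{L}\{\mathrm{d}S;s\}-a/(s-1)\bigr)-a/s$ spells out exactly what Remark~\ref{KWI r1} asserts, and your cautionary note about needing the analytic (not just real-part) statement before dividing is the reason the paper invokes Lemma~\ref{KWI main lemma} there.
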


Working with this new formulation has great practical value as in certain situations it is easier to apply than Theorem \ref{KWIthmain}. In fact, we will deduce the following corollary of Ingham-Karamata type from Theorem \ref{KWIthStieltjes}. We shall exemplify its usefulness in Section \ref{Section Blackwell's theorem} by giving a quick Tauberian proof of Blackwell's renewal theorem \cite{Blackwell}. We also give there a simpler treatment of Koga's renewal theorem for Dirichlet series {\cite[Theorem 5]{Koga2021} based on Theorem \ref{KWIthStieltjes}.

\begin{corollary}[Ingham-Karamata type theorem]\label{KWIthIK} Let $T$ be a function on $[0,\infty)$ such that $T(x)+Mx$ is non-decreasing, for some constant $M>0$, and such that its Laplace-Stieltjes transform converges on $\Re e\: s>0$. Suppose that  $U(s)=\Re e\: \int_{0^{-}}^{\infty} e^{-sx}\mathrm{d}T(x)$  has $L^{1}_{loc}$-boundary behavior on $i\mathbb{R}\setminus\{0\}$ and that there is some $\lambda>0$ such that one of the following two conditions holds:
\begin{itemize}
\item [(B$_{0}$.1)] there is $g\in L^{1}(-\lambda,\lambda)$ and $k\in \mathbb{N}$ such that $U(\sigma+it)\geq g(t)$ for a.e.  $t\in (-\lambda,\lambda)$ and  $\sigma\in (0,1]$;
\item [(B$_{0}$.2)] $\displaystyle \sup_{0<\sigma<1}\int_{-\lambda}^{\lambda} |U(\sigma+it)| \mathrm{d}t<\infty$;
\end{itemize}
Then, there is a function $\tau(x)=o(1)$ as $x\to\infty$ such that
\begin{equation} 
\label{KWI conclusion IK}
	T(x)= ax + \int_{0}^{x} \tau(u) \mathrm{d}u + o(1) \qquad \mbox{as }x\to\infty, 
\end{equation}
where $a$ is given by
\begin{equation} 
\label{KWI eq limit Laplace IK}
	a=\lim_{\sigma\to{0}^{+}} \sigma U(\sigma).
\end{equation}

\end{corollary}

We shall also prove the next Tauberian theorem for power series, which improves upon \cite[Theorem 3]{Koga2021}.

\begin{theorem}[Power series]\label{KWIthpowerseries} Let   $F(z)=\sum_{n=0}^{\infty} c_n z^n$ be analytic on the unit disc $\mathbb{D}$ with real coefficients $\{c_n\}_{n=0}^{\infty}$. Suppose that the harmonic function $U(z)=\Re e\: F(z)$ has $L^{1}_{loc}$-boundary behavior on $\partial\mathbb{D}\setminus\{1\}$ and there is some $\theta_0\in (0,\pi)$ such that one of the following two conditions holds:
\begin{itemize}
\item [(b.1)] there is $g\in L^{1}(-\theta_0,\theta_0)$ such that $
U(re^{i \theta})\geq g(\theta)$ for a.e. $\theta\in (-\theta_0,\theta_0) $ and  $r\in [0,1)$;

\item [(b.2)] $\displaystyle \sup_{0<r<1}\int_{-\theta_0}^{\theta_0} |U(re^{\theta})| \mathrm{d}\theta<\infty$;
\end{itemize}
Then $\{c_n\}_{n=0}^{\infty}$ is convergent. In particular, its limit is given by 
\begin{equation}
\label{KWI eq limit}
\lim_{n\to\infty}c_n=\lim_{r\to 1^{-}}(1-r)U(r).
\end{equation}
\end{theorem}

The plan of the article is as follows. We discuss in Section \ref{Section Blackwell's theorem}  how Theorem \ref{KWIthStieltjes}, Corollary \ref{KWIthIK}, and Theorem \ref{KWIthpowerseries} can be applied  to renewal theory \cite[Chapter XI]{FellerBook2}; our applications emphasize the role of the assumptions (B.1), (B$_{0}$.1), and (b.1) in the corresponding cases, which make the theorems relatively simple to apply. In Section \ref{KWI Tauberian lemmas}, we obtain a slight extension of the exact Wiener-Ikehara Tauberian theorem \cite[Theorem 3.6]{D-V2016}, where we shall show that \eqref{conclusion W-I} holds if and only if $S$ is log-linearly slowly decreasing, its Laplace transform converges for $\Re e\: s>1$, and the real part of the function $G$ given by \eqref{KWI eq pole function} has so-called local pseudofunction boundary behavior on $1+i\mathbb{R}$. Section \ref{KWI section proof Laplace transform versions} is devoted to the proofs of Theorem \ref{KWIthmain} and Theorem \ref{KWIthStieltjes}; our approach there will be to reduce them to the exact Wiener-Ikehara theorem from Section \ref{KWI Tauberian lemmas}. Theorem \ref{KWIthpowerseries} will be shown in Section \ref{KWI power series section}, while a proof of Corollary \ref{KWIthIK} will be given in Section \ref{Section proof of corollary}. Finally, we close the article with some remarks and further extensions of our Tauberian theorems, which will be discussed in Section \ref{section KWI concluding remarks}.

\section{Application: Renewal theorems}
\label{Section Blackwell's theorem}

Before showing our new versions of the Wiener-Ikehara theorem, we illustrate their usefulness with some applications. Our first application is to probability theory. We will give in this section a quick simple Tauberian proof of a fundamental result in renewal theory, namely, the renewal theorem \cite{FellerBook2}.  

Let $\mathrm{d}P$ be a probability measure\footnote{All measures considered in this article are locally finite Borel measures and their primitives are normalized to be right continuous and supported on the same interval as the measure when applicable.} on $[0,\infty)$  that is continuous at the origin, namely, $P(0)=0$. Its renewal function $Q$ is determined by the convolution equation
\begin{equation}\label{KWI eq 2.1}
\mathrm{d}Q= \delta+\mathrm{d}Q\ast\mathrm{d}P,
\end{equation} where hereafter $\delta$ stands for the Dirac delta measure concentrated at $0$.
 In fact,  the solution to  \eqref{KWI eq 2.1} is given by the convergent\footnote{Unlike $\mathrm{d}P$, the measure $\mathrm{d}Q$ might not be finite, the convergence is thus interpreted in e.g. the space of Radon measures.} series $\mathrm{d}Q=\sum_{n=0}^{\infty}\mathrm{d}P^{\ast n}$. 
 
 We shall distinguish two cases for $\mathrm{d}P$. We say that it is \emph{lattice} if there is $\alpha>0$ such that $\mathrm{d}P$ is concentrated on $\alpha \mathbb{N}=\{\alpha, 2\alpha , 3\alpha \dots \}$ (when $\alpha$ is maximal we call it its \emph{span}); otherwise, we shall call $\mathrm{d}P$ \emph{non-lattice}.
 
 \begin{theorem}[The renewal theorem \cite{Blackwell,EFP1949,Kolmogorov1936}]
If $\mathrm{d}P$ is non-lattice, then, for each $h>0$,
\begin{equation}
\label{KWI eq 2.2} Q(h+x)-Q(x)\to \frac{h}{\int_{0}^{\infty}x\:\mathrm{d}P(x)}, \qquad x\to \infty.
\end{equation}
For lattice $\mathrm{d}P$ with span $\alpha>0$, the relation  \eqref{KWI eq 2.2} holds for all $h=n \alpha$, $n\in\mathbb{N}$.
\end{theorem}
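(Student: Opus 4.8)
The plan is to derive both assertions from our Tauberian theorems, in each case through an auxiliary transform engineered so that its \emph{real part is automatically nonnegative}; this is precisely the configuration covered by hypothesis (B.1) (respectively (b.1)), and it reduces the verification of the hypotheses of Theorems~\ref{KWIthStieltjes} and \ref{KWIthpowerseries} to a couple of elementary observations. Throughout, set $\mu=\int_{0}^{\infty}x\,\mathrm{d}P(x)\in(0,\infty]$, with the convention that $h/\mu=0$ when $\mu=\infty$, and write $\widehat P(s)=\mathcal{L}\{\mathrm{d}P;s\}$. By dominated convergence $\widehat P$ is continuous on $\Re e\,s\ge0$ and analytic on $\Re e\,s>0$, $\widehat P(0)=1$, and $P(0)=0$ forces $|\widehat P(s)|<1$ for $\Re e\,s>0$; moreover \eqref{KWI eq 2.1} gives $\mathcal{L}\{\mathrm{d}Q;s\}=1/(1-\widehat P(s))$ on $\Re e\,s>0$, and $\mathrm{d}Q$ is a locally finite measure on $[0,\infty)$.

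\emph{Non-lattice case.} Define $S$ by $\mathrm{d}S(x)=e^{x}\,\mathrm{d}Q(x)$ (a nonnegative measure) and normalise $S$ to be right-continuous; being non-decreasing, $S$ is of local bounded variation and is (trivially) log-linearly decreasing. Then
\[
\mathcal{L}\{\mathrm{d}S;s\}=\mathcal{L}\{\mathrm{d}Q;s-1\}=\frac{1}{1-\widehat P(s-1)},
\]
which converges and is analytic on $\Re e\,s>1$, and
\[
U(s)=\Re e\,\mathcal{L}\{\mathrm{d}S;s\}=\frac{1-\Re e\,\widehat P(s-1)}{|1-\widehat P(s-1)|^{2}}\ \ge\ 0 ,
\]
because $\Re e\,\widehat P(u)=\int_{0}^{\infty}e^{-(\Re e\,u)x}\cos((\Im m\,u)x)\,\mathrm{d}P(x)\le\int_{0}^{\infty}\mathrm{d}P(x)=1$ for $\Re e\,u\ge0$; thus (B.1) holds with $c=0$ and $\lambda$ arbitrary. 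Non-latticeness forces $\widehat P(it)\neq1$ for every $t\neq0$ (if $\Re e\,\widehat P(it)=\int\cos(tx)\,\mathrm{d}P(x)=1$ then $\cos(tx)=1$ $\mathrm{d}P$-a.e., so $\mathrm{d}P$ would be concentrated on $(2\pi/|t|)\mathbb{N}$), whence, by continuity of $\widehat P$ up to the imaginary axis, $\mathcal{L}\{\mathrm{d}S;s\}$ and therefore $U$ extend continuously to $1+i(\mathbb{R}\setminus\{0\})$ — more than the required $L^{1}_{loc}$-boundary behavior. Theorem~\ref{KWIthStieltjes} now gives $S(x)\sim ae^{x}$ with
\[
a=\lim_{\sigma\to1^{+}}(\sigma-1)U(\sigma)=\lim_{u\to0^{+}}\frac{u}{1-\widehat P(u)}=\frac{1}{\mu},
\]
since $\frac{1-\widehat P(u)}{u}=\int_{0}^{\infty}\frac{1-e^{-ux}}{u}\,\mathrm{d}P(x)\to\mu$ as $u\to0^{+}$ by monotone convergence. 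Finally, to recover \eqref{KWI eq 2.2} for every $h>0$, write $\mathrm{d}Q(y)=e^{-y}\,\mathrm{d}S(y)$ and integrate by parts:
\[
Q(x+h)-Q(x)=\int_{(x,x+h]}e^{-y}\,\mathrm{d}S(y)=\Big[e^{-y}S(y)\Big]_{y=x}^{y=x+h}+\int_{x}^{x+h}e^{-y}S(y)\,\mathrm{d}y,
\]
and since $e^{-y}S(y)\to1/\mu$ as $y\to\infty$ the right-hand side tends to $1/\mu-1/\mu+h/\mu=h/\mu$.

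\emph{Lattice case.} Write $\mathrm{d}P=\sum_{n\ge1}p_{n}\delta_{n\alpha}$, $\mathrm{d}Q=\sum_{n\ge0}q_{n}\delta_{n\alpha}$, so that \eqref{KWI eq 2.1} reads $q_{0}=1$, $q_{n}=\sum_{k=1}^{n}p_{k}q_{n-k}$, and maximality of the span means $\gcd\{n:p_{n}>0\}=1$. Put $F(z)=\sum_{n\ge0}q_{n}z^{n}=1/(1-P(z))$ with $P(z)=\sum_{n\ge1}p_{n}z^{n}$; this is analytic on $\mathbb{D}$ (since $|P(z)|\le\sum p_{n}r^{n}<1$ for $|z|=r<1$) and has real coefficients. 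Exactly as before, $U(z)=\Re e\,F(z)=(1-\Re e\,P(z))/|1-P(z)|^{2}\ge0$ on $\mathbb{D}$, because $\Re e\,P(z)=\sum p_{n}r^{n}\cos(n\theta)\le1$, so (b.1) holds with $c=0$; and $\gcd\{n:p_{n}>0\}=1$ forces $|P(z)|<1$ on $\partial\mathbb{D}\setminus\{1\}$, so $F$, hence $U$, extends continuously to $\partial\mathbb{D}\setminus\{1\}$. Theorem~\ref{KWIthpowerseries} yields $q_{n}\to\lim_{r\to1^{-}}(1-r)U(r)=\lim_{r\to1^{-}}\frac{1-r}{1-P(r)}=\frac{1}{\sum_{n}np_{n}}=\frac{\alpha}{\mu}$, and since $Q$ is constant on each interval $[k\alpha,(k+1)\alpha)$ this convergence translates into \eqref{KWI eq 2.2} for all $h=m\alpha$, $m\in\mathbb{N}$.

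The one step requiring an idea rather than bookkeeping is the choice of the auxiliary object: replacing $\mathrm{d}Q$ by $e^{x}\,\mathrm{d}Q$ (respectively working with $1/(1-P(z))$ directly) moves the pole to $s=1$ (respectively $z=1$) and makes the real part of the transform nonnegative, which is exactly the situation for which (B.1)/(b.1) is designed. Everything else — the identification of non-latticeness with $\widehat P(it)\neq1$, the continuity of $\widehat P$ up to the boundary, the residue computation via monotone convergence, and the elementary Abelian passage from $S$ back to $Q$ — is routine.
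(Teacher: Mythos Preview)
Your proof is correct and follows essentially the same route as the paper's: in both cases you exploit $\Re e\,F\ge 0$ to invoke (B.1)/(b.1), use the non-lattice/gcd condition to obtain a continuous boundary extension away from the singular point, compute $a=1/\mu$ via the same Abelian limit, and pass back to $Q$ by integration by parts (respectively Ces\`aro-type summation). One small slip in the lattice case: the condition $\gcd\{n:p_n>0\}=1$ only yields $P(e^{i\theta})\neq 1$ for $\theta\neq 0$, not $|P(e^{i\theta})|<1$ (take $P(z)=z$), but this weaker statement is exactly what is needed for $F=1/(1-P)$ to extend continuously to $\partial\mathbb{D}\setminus\{1\}$.
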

\begin{proof} We divide the proof into the corresponding two cases.

\emph{Non-lattice $\mathrm{d}P$} (Blackwell's renewal theorem). Let $F(s):=\mathcal{L}\{\mathrm{d}Q; s\}$. Laplace transforming \eqref{KWI eq 2.1}, we obtain
\begin{equation}
\label{KWI eq 2.3} F(s)= \frac{1}{1-G(s)}, \qquad \Re e\: s>0.
\end{equation}
with $G(s)=\mathcal{L}\{\mathrm{d}P; s\}$. The function $G(s)$ clearly extends continuously to the imaginary axis $i\mathbb{R}$ (because $\mathrm{d}P$ is a finite measure), and, with the exception of $s=0$, we have $G(s)\neq 1$ for all other points of $\{s:\Re e\: s\geq 0\}$ (since otherwise $\mathrm{d}P$ would necessarily be lattice). We conclude that $F$ has a continuous extension to $i\mathbb{R}\setminus\{0\}$ and in particular has $L^{1}_{loc}$-behavior on this boundary subset. Furthermore, 
$$
\Re e\: F(s)= \frac{1-\int_{0^{-}}^{\infty}e^{-\sigma x}\cos (t x)\mathrm{d}P(x)}{|1-G(s)|^{2}}>0, \qquad \sigma=\Re e \:s>0.
$$
Corollary \ref{KWIthIK} applied to the non-decreasing function $Q$ then yields 
$$ Q(x)= ax+ \int_{0}^{x}\tau(u)\mathrm{d}u +o(1)$$
with (see \eqref{KWI eq limit Laplace IK})
$$
a=\lim_{\sigma\to 0^{+}} \sigma F(\sigma)=\lim_{\sigma\to 0^{+}}\frac{\sigma} {1-\int_{0^{-}}^{\infty} e^{-\sigma x}\mathrm{d}P(x) }= \frac{1}{\int_{0}^{\infty}x\mathrm{d}P(x)}
$$
 and some function $\tau(x)=o(1)$,  whence \eqref{KWI eq 2.2} follows at once.

\emph{Lattice $\mathrm{d}P$} (the Kolmogorov-Erd\H{o}s-Feller-Pollard renewal theorem). In this case $\mathrm{d} P(x)=\sum_{n=0}^{\infty} p_{n} \delta(x-n\alpha)$ and $\mathrm{d} Q(x)=\sum_{n=0}^{\infty} q_{n} \delta(x-n\alpha)$
with $q_0=1$,  $p_{0}=0$, and $\sum_{n=1}^{\infty}p_n=1$. Furthermore, these non-negative sequences are linked by the convolution relation
\begin{equation}
\label{KWI eq 2.4}
q_n=\sum_{k=1}^{n} p_{k}q_{n-k}, \qquad n\geq 1. 
\end{equation}
Since we assumed $\alpha$ to be maximal, we have $1=\mathrm{gcd}\{n: p_n\neq 0\}$, which implies that $G(re^{i\theta})\neq 1$ for all $\theta\in [-\pi,\pi] \setminus\{0\}$. Here $G$ stands for the power series $G(z)=\sum_{n=1}^{\infty} p_n z^{n}$, which is continuous on the closed unit disc. Due to \eqref{KWI eq 2.4}, we obtain $F(z)=\sum_{n=0}^{\infty}q_nz^n=(1-G(z))^{-1}$.   As in the previous the case, we also have  $\Re e\: F(z)>0$ for all $z\in\mathbb{D}$ (since $\Re e\: G(z)<1$ on $\mathbb{D}$). Hence, Theorem \ref{KWIthpowerseries} allows us to conclude that 
$$
\lim_{n\to\infty} q_n=\lim_{r\to1^{-}} \frac{1-r}{1-\sum_{n=1}^{\infty}r^{n}p_n} = \frac{1}{\sum_{n=1}^{\infty}np_n},
$$
which completes the proof of the renewal theorem.
\end{proof}

We can also give a simpler proof than Koga's original one for his version of the renewal theorem for Dirichlet series. The symbol $\star$ below stands for the Dirichlet convolution \cite{TenenbaumBook} of two arithmetic functions, while $e$ denotes the identity of this convolution, namely, the arithmetic function $e:\mathbb{N}\to\mathbb{R}$ given by $e(1)=1$ and $e(n)=0$ for $n\geq2$.

\begin{theorem}[{\cite[Theorem 5]{Koga2021}}]
Let $g:\mathbb{N}\to [0,\infty)$ be such that  $\sum_{n=2}^{\infty}{g(n)}/n=1$, $g(1)=0$, and no set $\{d^{k}: k\in\mathbb{N}\}$ with $d\geq2$ entirely contains $\{n: \: g(n)\neq0\}$. If $f$ is defined through $f=e+ f\star g$, then 
$$
\lim_{x\to\infty}\frac{1}{x}\sum_{n\leq x} f(n)= \displaystyle\frac{1}{\sum_{n=2}^{\infty} \frac{g(n)\log n}{n}}\:.
$$
\end{theorem}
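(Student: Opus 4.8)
The plan is to reduce the statement to the Laplace--Stieltjes version, Theorem~\ref{KWIthStieltjes}, through the exponential change of variables relating Dirichlet series to Laplace transforms. For an arithmetic function $h$ write $D_{h}(s)=\sum_{n\ge 1}h(n)n^{-s}$, and set $S(x)=\sum_{n\le e^{x}}f(n)$, so that $\mathcal{L}\{\mathrm{d}S;s\}=\sum_{n\ge 1}f(n)e^{-s\log n}=D_{f}(s)$ and the assertion $\frac1x\sum_{n\le x}f(n)\to a$ is equivalent to $S(x)\sim ae^{x}$. First I would record the elementary structural facts. Iterating $f=e+f\star g$ yields $f=\sum_{k\ge 0}g^{\star k}$, a sum that is finite at each $n$ because $g(1)=0$; hence $f\ge 0$, so $S$ is non-decreasing, of local bounded variation, and in particular log-linearly slowly decreasing. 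Moreover $D_{f}(s)=\sum_{k\ge 0}D_{g}(s)^{k}=(1-D_{g}(s))^{-1}$ wherever $|D_{g}(s)|<1$. Since $g\ge 0$ and $\sum_{n\ge 2}g(n)/n=1$, the series $D_{g}$ converges absolutely on $\Re e\,s\ge 1$, with $|D_{g}(s)|<1$ for $\Re e\,s>1$; consequently $F(s):=D_{f}(s)$ converges for $\Re e\,s>1$ and
\[
\Re e\,F(s)=\frac{1-\Re e\,D_{g}(s)}{|1-D_{g}(s)|^{2}}>0,\qquad \Re e\,s>1,
\]
so hypothesis (B.1) of Theorem~\ref{KWIthmain} holds with $c=0$ on the entire half-plane.

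Next I would establish that $U=\Re e\,F$ has $L^{1}_{loc}$-boundary behavior on $1+i(\mathbb{R}\setminus\{0\})$. As $D_{g}$ extends continuously to $\Re e\,s=1$, it suffices to prove that $D_{g}(1+it)\neq 1$ for every $t\neq 0$, since then $F=(1-D_{g})^{-1}$, hence $U$, extends continuously across $1+it$. Suppose $D_{g}(1+it)=1$. Writing $D_{g}(1+it)=\sum_{n\ge 2}\frac{g(n)}{n}e^{-it\log n}$ as a convex combination of unit vectors, equality forces $e^{-it\log n}=1$, i.e. $t\log n\in 2\pi\mathbb{Z}$, for all $n$ with $g(n)\neq 0$. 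This is where the arithmetic hypothesis enters: the numbers $\log n$ (over $g(n)\neq 0$) then all lie in a discrete subgroup $L\mathbb{Z}$ of $\mathbb{R}$ with $L>0$, and writing $\log n=Lm_{n}$ with $\gcd\{m_{n}\}=1$ gives $\{n:g(n)\neq 0\}\subseteq\{\delta^{m}:m\in\mathbb{N}\}$ with $\delta=e^{L}>1$; since the set $\{m\ge 1:\delta^{m}\in\mathbb{Z}\}$ is closed under taking $\gcd$'s (by the rational root theorem) and contains numbers of gcd $1$, it contains $1$, so $\delta\in\mathbb{Z}_{\ge 2}$, contradicting the hypothesis. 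Hence $t=0$, and the required boundary behavior follows.

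All hypotheses of Theorem~\ref{KWIthStieltjes} being met, we obtain $S(x)\sim ae^{x}$, that is, $\frac1x\sum_{n\le x}f(n)\to a$, with
\[
a=\lim_{\sigma\to 1^{+}}(\sigma-1)\,\Re e\,F(\sigma)=\lim_{\sigma\to 1^{+}}\frac{\sigma-1}{1-D_{g}(\sigma)},
\]
the middle quantity being real since $F$ is real on $(1,\infty)$. Finally I would compute the limit: from $1-D_{g}(\sigma)=\sum_{n\ge 2}\frac{g(n)}{n}\bigl(1-n^{-(\sigma-1)}\bigr)$ and the fact that $u\mapsto(1-n^{-u})/u$ increases to $\log n$ as $u\to 0^{+}$, monotone convergence gives $(1-D_{g}(\sigma))/(\sigma-1)\uparrow\sum_{n\ge 2}\frac{g(n)\log n}{n}$, whence $a=\bigl(\sum_{n\ge 2}\frac{g(n)\log n}{n}\bigr)^{-1}$ (equal to $0$ when the series diverges, consistently with the stated formula).

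The one genuinely delicate point is the second paragraph: converting the combinatorial hypothesis ``no $\{d^{k}:k\in\mathbb{N}\}$ with $d\ge 2$ contains $\{n:g(n)\neq 0\}$'' into the analytic non-vanishing $1-D_{g}(1+it)\neq 0$ for $t\neq 0$. This is precisely the multiplicative analogue of the non-lattice condition in the renewal theorem, and, as in that setting, once it is in hand the remainder of the argument is routine bookkeeping.
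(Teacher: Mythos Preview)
Your proof is correct and follows essentially the same route as the paper: set $S(x)=\sum_{n\le e^{x}}f(n)$, write $F(s)=\mathcal{L}\{\mathrm{d}S;s\}=(1-D_{g}(s))^{-1}$, verify (B.1) via $\Re e\,F>0$ and continuous extension to $1+i(\mathbb{R}\setminus\{0\})$ via $D_{g}(1+it)\neq 1$ for $t\neq 0$, then apply Theorem~\ref{KWIthStieltjes} and compute the limiting constant. The only difference is that the paper cites \cite[Lemma~11]{Koga2021} for the non-vanishing step, whereas you supply your own (correct) argument for it, and you add a few extra justifications (non-negativity of $f$, monotone convergence for the limit) that the paper leaves implicit.
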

\begin{proof}
We set $S(x)=\sum_{n\leq e^{x}}f(n)$, so that $F(s)=\mathcal{L}\{\mathrm{d}S;s\}=\sum_{n=1}^{\infty}f(n)/n^{s}$. The familiar properties of Dirichlet series and $f=e+ f\star g$ yield \eqref{KWI eq 2.3} with now $G$ given by the Dirichlet series of $g$, i.e., $G(s)=\sum_{n=2}^{\infty}g(n)/n^{s}$, which continuously extends to $\Re e\:s=1$. In view of \cite[Lemma 11]{Koga2021}, the assumption on $\{n: \: g(n)\neq0\}$ implies that $G(1+it)\neq 1$ for $t\neq0$. Obviously, $\Re e \: F(s)> 0$ on the entire open half-plane $\Re e\: s>1$. An application of Theorem \ref{KWIthStieltjes} thus shows that
$$
\lim_{x\to\infty}\frac{ S(x)}{e^{x}} =\lim_{\sigma\to1^{+}} (\sigma-1)F(\sigma)= \lim_{\sigma\to1^{+}} \frac{(\sigma-1)}{1-G(\sigma)}= \displaystyle\frac{1}{\sum_{n=2}^{\infty} \frac{g(n)\log n}{n}}\:.
$$
\end{proof}

\section{Exact Wiener-Ikehara theorem revisited}
\label{KWI Tauberian lemmas}
One of the exact Wiener-Ikehara theorems from \cite{D-V2016} states that for $S\in L^{1}_{loc}[0,\infty)$ to satisfy $S(x)\sim a e^{x}$ is necessary and sufficient that $S$ is log-linearly slowly decreasing, its Laplace transform converges for $\Re e\: s>1$, and 
\[G(s)=\mathcal{L}\{S;s\}-\frac{a}{s-1}\]
 has local pseudofunction boundary behavior on $1+i\mathbb{R}$. We wish to replace $G$ by its real part in this characterization. This will be done in fact in Corollary \ref{KWI exact theorem} below, but before we move on, let us briefly recall what is meant by local pseudofunction boundary behavior.
  
 In this and the next sections, we shall make use of Schwartz distribution theory. Our notation for calculus with distributions is as in the standard textbooks \cite{vladimirovbook} or \cite{estrada-kanwalBook}; in particular, we make use of dummy variables of evaluation to facilitate our manipulations. 
 As usual, $\mathcal{D}(I)$ stands for the space of smooth test functions with compact supports on an open set $I\subset \mathbb{R}$, while $\mathcal{D}'(I)$ is the space of distributions on $I$. We say that $f\in\mathcal{D}'(I)$ is a local \emph{pseudofunction} if for every $\varphi\in\mathcal{D}(I)$ the (distributional) Fourier transform of $\varphi f$ (which is entire by the Paley-Wiener theorem) is a continuous function that vanishes at $\pm\infty$. We then write $f\in\operatorname*{PF}_{loc}(I)$. Note that $L^{1}_{loc}(I)\subset \operatorname*{PF}_{loc}(I)$, thanks to the Riemann-Lebesgue lemma. In what follows we exploit that $\mathcal{D}'$ and $PF_{loc}$ are both (fine) sheaves, which allows us to work with localizations. 
 
 Let $I\subset \mathbb{R}$ be open. A harmonic function $U$ on $\Re e\:s>1$ is said to have 
 distributional boundary values on $1+iI$ if there is $u\in\mathcal{D}'(I)$ such that
 $$
 \lim_{\sigma\to 1^{+}} U(\sigma+it)=u(t) \qquad \mbox{in } \mathcal{D}'(I),
 $$
 that is, if for each test function $\varphi\in \mathcal{D}(I)$, 
  $$
\lim_{\sigma\to 1^{+}}\int_{-\infty}^{\infty} U(\sigma+it) \varphi(t)\mathrm{d}t=\langle u(t),\varphi(t)\rangle. $$
We say that $U$ has local pseudofunction boundary behavior on $1+iI$ if it has distributional boundary values there and its boundary distribution $u\in\operatorname*{PF}_{loc}(I)$. We refer to \cite{estrada-kanwal1982,Langenbruch1978} for the theory of boundary values of harmonic functions in distribution spaces (the article \cite{Langenbruch1978} actually deals with the general case of distributional boundary values for zero solutions of partially hypoelliptic constant coefficient partial differential operators, such as the Laplacian in our case). We point out that the harmonic function $U$ has distributional boundary values on $1+iI$ if and only if for each compact $K\subset I$ one can find $k=k(K)$ such that 
\begin{equation}
U(\sigma+it)=O\left(\frac{1}{(\sigma-1)^{k}}\right)
\end{equation}
for $t\in K$ and, say, $1<\sigma<2$.

The next lemma is our most important technical tool in this section.

\begin{lemma}\label{KWI main lemma}  Let $U$ be a real-valued harmonic function on the half-plane $\Re e\: s>1$ and let $I\subseteq \mathbb{R}$ be open. If $U$ has local pseudofunction boundary behavior on the boundary set $1+iI$, so does any harmonic conjugate to $U$.
\end{lemma}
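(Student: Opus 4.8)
The plan is to exploit that local pseudofunction boundary behavior is a local property (as noted in the excerpt, both $\mathcal{D}'$ and $\operatorname{PF}_{loc}$ are fine sheaves), so it suffices to show: for every point $t_0 \in I$ there is a neighborhood $(t_0-\eta,t_0+\eta)\subseteq I$ on which the harmonic conjugate $V$ has local pseudofunction boundary behavior. Fix a relatively compact open interval $J=(t_0-2\eta,t_0+2\eta)$ with $\overline J\subseteq I$. On the rectangle $R=\{\sigma+it:\ 1<\sigma<2,\ t\in J\}$, the function $H=U+iV$ is holomorphic (after adjusting $V$ by a real constant, which is harmless since constants are pseudofunctions), so on $R$ we may recover $H$ from $U$ by a Schwarz-integral/Poisson-type representation with respect to the boundary edges of the rectangle. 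The point is that $V$ near the edge $\{\sigma=1,\ t\in(t_0-\eta,t_0+\eta)\}$ is, up to a harmonic function that extends smoothly across that edge, the \emph{harmonic conjugate} of $U$, and the passage $U\mapsto V$ corresponds on the boundary to (a localized version of) the Hilbert transform.

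Concretely, I would argue as follows. Since $U$ has distributional boundary values on $1+iI$, by the characterization recalled just before the lemma there is $k$ with $U(\sigma+it)=O((\sigma-1)^{-k})$ for $t\in \overline J$, $1<\sigma<2$; this polynomial growth forces $V$ to satisfy the same type of bound on a slightly smaller rectangle (estimate $\partial_\sigma V=-\partial_t U$ and $\partial_t V=\partial_\sigma U$ via Cauchy estimates for the holomorphic $H$ on sub-discs, then integrate), so $V$ also has distributional boundary values $v\in\mathcal{D}'(t_0-\eta,t_0+\eta)$. Let $u\in\operatorname{PF}_{loc}(I)$ be the boundary distribution of $U$. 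Pick a cutoff $\chi\in\mathcal D(I)$ with $\chi\equiv 1$ on a neighborhood of $t_0$. Write $u=\chi u+(1-\chi)u$; the holomorphic function on $\Re e\,s>1$ whose real part has boundary distribution $\chi u$ can be built explicitly via the Cauchy transform $s\mapsto \frac{1}{2\pi i}\langle \chi(t)u(t),\ \frac{1}{t - (s-1)/i}\rangle$ type formula (equivalently, convolving $\chi u$ with the Poisson and conjugate-Poisson kernels), and because $\widehat{\chi u}$ is continuous and vanishes at $\pm\infty$, both the real and imaginary parts of this function have $\operatorname{PF}_{loc}$ — indeed $\operatorname{PF}$ on all of $\mathbb R$ — boundary values, the imaginary boundary value being the distributional Hilbert transform of $\chi u$, whose Fourier transform is $-i\,\mathrm{sgn}(\xi)\,\widehat{\chi u}(\xi)$, still continuous and vanishing at infinity. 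The contribution of $(1-\chi)u$, together with the difference between $H$ and this model, is holomorphic on $\Re e\,s>1$ and extends harmonically (hence $C^\infty$) across the segment $1+i(t_0-\eta,t_0+\eta)$, since $(1-\chi)u$ vanishes there; so its imaginary boundary value is smooth, in particular in $\operatorname{PF}_{loc}$. Adding up, $v$ restricted to $(t_0-\eta,t_0+\eta)$ lies in $\operatorname{PF}_{loc}$, and letting $t_0$ range over $I$ and using the sheaf property gives the claim.

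The main obstacle is making the ``recover $V$ from $U$ by a boundary integral'' step rigorous at the level of distributions rather than functions: one must justify convolving the distribution $\chi u$ with the conjugate Poisson kernel $\frac{1}{\pi}\,\frac{t}{(\sigma-1)^2+t^2}$ and show that the resulting harmonic function has exactly $v$ (plus a smooth correction) as its boundary value, and that the Fourier transform of its boundary distribution is $-i\,\mathrm{sgn}(\xi)e^{-(\sigma-1)|\xi|}\widehat{\chi u}(\xi)$ in the limit $\sigma\to 1^+$. This is where I would lean on the theory of distributional boundary values of harmonic functions from \cite{estrada-kanwal1982, Langenbruch1978}: uniqueness of harmonic functions with a prescribed distributional boundary value on an edge (up to smooth stuff) identifies our model with $H$ near $t_0$, and the explicit Fourier-side computation then delivers continuity and decay at $\pm\infty$, which is precisely the $\operatorname{PF}$ condition. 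Everything else — the polynomial growth bookkeeping, the Cauchy estimates for conjugate harmonic functions, and the sheaf-theoretic patching — is routine.
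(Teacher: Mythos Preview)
Your argument is correct but takes a genuinely different route from the paper's. The paper avoids the Hilbert transform and conjugate Poisson kernel entirely. Instead, it first reduces (via $U(s)=\tfrac12(U(s)+U(\bar s))+\tfrac12(U(s)-U(\bar s))$) to the case $U(\bar s)=\pm U(s)$; in the symmetric case one may take the conjugate $V$ antisymmetric, so the boundary distributions $u$ and $f=u+iv$ have definite parity. The key external input is \cite[Proposition~2.1]{D-V2016}: for the boundary distribution $f$ of any holomorphic function on the half-plane one has $\langle f,\varphi e^{iht}\rangle\to 0$ as $h\to-\infty$ automatically. Decomposing an arbitrary test function into even and odd real parts and using parity to replace $\langle f,\varphi e^{iht}\rangle$ by $\langle u,\varphi(e^{iht}\pm e^{-iht})\rangle$ (up to the already-known $o(1)$ term), the two-sided decay of $\widehat{\varphi u}$ then yields the missing decay as $h\to+\infty$. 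Your approach is more constructive---it actually identifies $v$ locally as $\mathcal{H}(\chi u)$ modulo a smooth correction, and the $\operatorname{PF}_{loc}$ property follows from the multiplier identity $\widehat{\mathcal{H}g}=-i\,\mathrm{sgn}(\xi)\,\hat g$ together with the fact that convolution with $\hat\psi\in L^1$ preserves $C_0$. The paper's proof is shorter and needs no boundary-representation or reflection machinery, but it relies on the even/odd decomposition of $U$, which strictly speaking requires the reflected boundary value $u(-t)$ to be available on $I$ (automatic in the paper's applications, where $U=\Re e\,\mathcal{L}\{S;\cdot\}$ with real $S$ already satisfies $U(\bar s)=U(s)$); your fully local argument sidesteps this point.
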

\begin{proof} $Let$ $V$ be a harmonic conjugate to $U$. Note that since $U$ has distributional boundary values, then $V$ should also admit a boundary distribution\footnote{This is easily seen for a harmonic function on the unit disc $U(re^{i\theta})=\sum_{n\in\mathbb{Z}} c_n r^{|n|} e^{in\theta}$, because having distributional boundary values in this case becomes equivalent to $\{c_n\}_{n\in\mathbb{Z}}$ being of at most polynomial growth (see e.g. \cite{estrada-kanwal1982}). Since our assertion is local, the general case follows by applying conformal maps mapping  boundary segments into disc arcs.}. It suffices to see that the analytic function $F=U+iV$ has local pseudofunction boundary behavior on $1+iI$. We first show this under the additional assumption $U(\bar{s})=U(s)$. Using the Cauchy-Riemann equations, we see that $(V(s)-V(\bar{s}))/2$ must also be harmonic conjugate to $U$. Therefore, dropping a constant summand, we may assume that $V$ satisfies $V(\bar{s})=-V(s)$. We might also assume that $I$ is symmetric about the origin. Set
$$
u(t)=\lim_{\sigma\to1^{+}} U(\sigma+it)\in {\operatorname*{PF}}_{loc}(I) \qquad \mbox{ and } \qquad f(t)=\lim_{\sigma\to 1^{+}} F(\sigma +it)\in \mathcal{D}'(I).
$$
By \cite[Proposition 2.1]{D-V2016}, $\langle f(t),\varphi(t)e^{ih t}\rangle=o(1)$ as $h\to-\infty$, for each $\varphi\in \mathcal{D}(I)$ and our assumption is $\langle u(t),\varphi(t)e^{ih t}\rangle=o(1)$ as $|h|\to\infty$. We also notice that $u$ is an even distribution, while $f(t)-u(t)$ is odd. For $h>0$ and $\varphi\in \mathcal{D}(I)$ real-valued and even,
$$
\langle f(t),\varphi(t)e^{ih t}\rangle= \langle f(t),\varphi(t)(e^{ih t}+e^{-iht})\rangle +o(1)= \langle u(t),\varphi(t)(e^{ih t}+e^{-iht})\rangle +o(1)=o(1)
$$
as $h\to\infty$. Likewise, for $\varphi\in \mathcal{D}(I)$ real-valued and odd,
$$
\langle f(t),\varphi(t)e^{ih t}\rangle=  \langle u(t),\varphi(t)(e^{ih t}-e^{-iht})\rangle +o(1)=o(1) \qquad  \mbox{as }h\to \infty.
$$
Decomposing an arbitrary test function into real and imaginary parts, and then each of them into the sum of their even and odd parts, we obtain that $\langle f(t),\varphi(t)e^{ih t}\rangle= o(1)$ as $|h|\to\infty$ for each $\varphi\in \mathcal{D}(I)$, namely, $f\in{\operatorname*{PF}}_{loc}(I).$

A small variant of the above argument also applies when $U$ satisfies $U(s)=-U(\bar{s})$. Finally, the general case follows from these two particular ones by writing $U(s)= (U(s)+U(\bar{s}))/2 + (U(s)-U(\bar{s}))/2$.
\end{proof}
Lemma \ref{KWI main lemma} and \cite[Theorem 3.6]{D-V2016} together thus yield:

\begin{corollary}\label{KWI exact theorem}
 Let $S\in L^{1}_{loc}[0,\infty)$. Then,
$ S(x) \sim ae^{x}$
holds if and only if $S$ is log-linearly slowly decreasing, its Laplace transform is convergent on $\Re e\: s>1$, and the harmonic function
\begin{equation}
\label{LaplacePoleq}
\Re e\:\left(
\mathcal{L}\{S;s\} - \frac{a}{s-1}\right)
\end{equation}
admits local pseudofunction boundary behavior on the whole line $\Re e \: s = 1$.
\end{corollary}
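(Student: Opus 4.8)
The plan is to derive Corollary \ref{KWI exact theorem} by feeding Lemma \ref{KWI main lemma} into the exact Wiener--Ikehara theorem \cite[Theorem 3.16]{D-V2016}. That theorem characterizes $S(x)\sim ae^{x}$ (for $S\in L^{1}_{loc}[0,\infty)$) by the conjunction of three conditions: log-linear slow decrease of $S$, convergence of $\mathcal{L}\{S;s\}$ on $\Re e\: s>1$, and local pseudofunction boundary behavior of $G(s)=\mathcal{L}\{S;s\}-a/(s-1)$ on $1+i\mathbb{R}$. Since the first two requirements coincide verbatim with those of the corollary, I would reduce everything to the single equivalence: \emph{$G$ has local pseudofunction boundary behavior on $1+i\mathbb{R}$ if and only if its real part $U=\Re e\: G$ does.}

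First I would dispatch the easy direction. If $G$ admits a boundary distribution $g\in\operatorname*{PF}_{loc}(\mathbb{R})$, then pairing against real-valued test functions and taking real parts in the limit shows that $U$ has boundary distribution $\Re e\: g=(g+\overline{g})/2$. Since $\operatorname*{PF}_{loc}(\mathbb{R})$ is a vector space that is invariant under complex conjugation --- the relation $\widehat{\varphi\overline{h}}(\xi)=\overline{\widehat{\overline{\varphi}h}(-\xi)}$ transfers continuity and vanishing at $\pm\infty$ of $\widehat{\varphi h}$ to $\widehat{\varphi\overline{h}}$ --- it follows that $\Re e\: g\in\operatorname*{PF}_{loc}(\mathbb{R})$, i.e.\ $U$ has local pseudofunction boundary behavior.

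The converse is where the real content lies and where I would invoke Lemma \ref{KWI main lemma}. Assuming $U=\Re e\: G$ has local pseudofunction boundary behavior on $1+i\mathbb{R}$, I note that $G$ is holomorphic on $\Re e\: s>1$ (its Laplace transform converges there), so its imaginary part $V=\Im m\: G$ is a harmonic conjugate of the real-valued harmonic function $U$; Lemma \ref{KWI main lemma} then yields that $V$, hence also $iV$ (multiplication by $i$ preserves $\operatorname*{PF}_{loc}$), has local pseudofunction boundary behavior on $1+i\mathbb{R}$, and therefore so does $G=U+iV$, with boundary distribution the sum of the two (both lying in $\operatorname*{PF}_{loc}(\mathbb{R})$). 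An application of \cite[Theorem 3.16]{D-V2016} then produces $S(x)\sim ae^{x}$, while re-running the easy direction takes care of the necessity of the three stated conditions. I expect the only genuine obstacle to be precisely the harmonic-conjugate statement of Lemma \ref{KWI main lemma} --- already settled --- and, at the level of the corollary itself, merely the care needed to match boundary distributions and to use the sheaf-type closure properties of $\operatorname*{PF}_{loc}$.
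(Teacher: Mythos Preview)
Your proposal is correct and follows exactly the route the paper takes: combine Lemma~\ref{KWI main lemma} with \cite[Theorem~3.16]{D-V2016}, reducing the corollary to the equivalence of local pseudofunction boundary behavior for $G$ and for $\Re e\,G$. The paper condenses this into a single line, but your write-up spells out precisely the two directions of that equivalence (the ``easy'' closure of $\operatorname*{PF}_{loc}$ under complex conjugation, and the ``hard'' harmonic-conjugate step via Lemma~\ref{KWI main lemma}) that the paper leaves implicit.
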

\begin{remark}
\label{KWI r1} Let $S$ be of local bounded variation, so that $\mathcal{L}\{\mathrm{d}S;s\}=s\mathcal{L}\{S;s\}$. Since smooth functions are multipliers for local pseudofunctions,   $\mathcal{L}\{S;s\} - a/(s-1)$ has local pseudofunction boundary behavior on a given boundary subset if and only if  $\mathcal{L}\{\mathrm{d}S;s\} - a/(s-1)$ does it. Employing Lemma \ref{KWI main lemma} once more, we might replace \eqref{LaplacePoleq} by the hypothesis that the real part of $\mathcal{L}\{\mathrm{d}S;s\} - a/(s-1)$ has local pseudofunction boundary behavior on $\Re e\:s=1$.
\end{remark}

\begin{remark}
\label{KWI rm l1 vs pf} Lemma \ref{KWI main lemma} highlights a key advantage of the local pseudofunction approach over $L^1_{loc}$-boundary behavior. In fact, it is well-known that if a harmonic function has $L^1_{loc}$-boundary behavior, the distributional boundary values of its harmonic conjugate functions do not necessarily belong to $L^1_{loc}$ (see e.g. \cite[p.~73]{Katznelsonbook})
\end{remark}

\section{Proof of Theorem \ref{KWIthmain} (and Theorem \ref{KWIthStieltjes})}
\label{KWI section proof Laplace transform versions}

We are now ready to show Theorem \ref{KWIthmain}. We will do so with the aid of Corollary \ref{KWI exact theorem}. Observe that it suffices to prove \eqref{KWI conclusion}, since once this is established \eqref{KWI eq limit Laplace} automatically holds by the familiar real Abelian result for Laplace transforms.

The first part of our proof consists in showing that any of our assumptions imply that $U$ admits a boundary distribution on $1+i\mathbb{R}$. This is actually our hypothesis away from the boundary point $s=1$, where we even have the stronger $L_{loc}^{1}$-boundary behavior. So, we must then still establish the existence of a boundary distribution in a boundary neighborhood of 1. We need some preparation for it.

Let $\Phi:\mathbb{D}\to\Omega$ be a conformal equivalence between the unit disc and a region $\Omega\subset\{s: \:  \Re e\: s>1\}$ whose boundary is a smooth (namely, $C^\infty$) Jordan curve that meets the line $\Re e\:s=1$ in a closed segment containing the interval $1+i(-\lambda,\lambda)$.
 Classical results from the theory of conformal maps (see \cite[Theorem 2.6, p.~24; Theorem 3.5, p.~48; Theorem 3.6, p.~49]{PommerenkeBook} guarantee that $\Phi$ extends to a smooth diffeomorphism $\Phi:\overline{\mathbb{D}}\to\overline{\Omega}$, and we may assume that $\Phi(1)=1.$ Moreover, if $\Phi(J)=1+i[-\lambda,\lambda]$, then $\Phi$ has analytic extension through an open circular arc containing $J$, as one infers from the Schwarz reflection principle for analytic arcs. We consider the harmonic function $V=U\circ\Phi$.

\begin{claim}
\label{KWIClaim1} The function $V$ belongs to the harmonic Hardy space $h^{1}(\mathbb{D})$.
\end{claim}
\begin{proof} If (B.2) is satisfied, we directly get $V\in h^{1}(\mathbb{D})$,
as inferred from\footnote{Theorem 10.1 is only stated for analytic functions in \cite[p.~168]{durenBook}, but the proof given there applies to harmonic functions as well.}
\cite[Theorem~10.1, p.~168]{durenBook}  and  \cite[Theorem 3.5, p.~48]{PommerenkeBook}. Assume now that (B.1) holds. Applying again \cite[Theorem~10.1, p.~168]{durenBook}, we conclude that $V$ has $L^1_{loc}$-boundary behavior on any boundary arc that does not contain the boundary point $1$. Combining the latter fact with a.e. existence of the radial boundary limits of $V$, we obtain that $V$ belongs to the harmonic Hardy space $h^{1}(S)$ for any disc sector of the form
 $$
  S=\{z\in\mathbb{D}: \: \arg z \notin J'\} \qquad \mbox{with a closed subarc } 1\in J' \subsetneq J.
 $$  
 Let us fix such a sector $S$. We also write $J=\{e^{i\theta}: \theta\in [\alpha,\beta]\}$. 
 We have
 $$
 \sup_{0<r<1} \int_{[-\pi,\pi]\setminus{(\alpha,\beta)}} |V(re^{i\theta})|\mathrm{d}\theta<\infty,
 $$
because $V\in h^{1}(S)$. Therefore, we just need to bound the integral of $|V(re^{i\theta})|$ over $[\alpha,\beta]$ as $r\to1^{-}$. Using the mean value property of harmonic functions and condition (B.1), we obtain
\begin{align*}
\int_{\alpha}^{\beta} |V(re^{i\theta})|\mathrm{d\theta}&\leq \int_{\alpha}^{\beta} |g(\Im m\: (\Phi(re^{i\theta})))|\mathrm{d}\theta + \int_{\alpha}^{\beta}[ V(re^{i\theta})- g(\Im m \: (\Phi(re^{i\theta})))]\mathrm{d}\theta
\\
& \leq 2 \int_{\alpha}^{\beta} |g(\Im m\: (\Phi(re^{i\theta})))|\mathrm{d}\theta + 2\pi V(0) +  \int_{[-\pi,\pi]\setminus{(\alpha,\beta)}} |V(re^{i\theta})|\mathrm{d}\theta
\\
&= 2 \int_{\alpha}^{\beta} |g(\Im m\: (\Phi(e^{i\theta})))|\mathrm{d}\theta + o(1)+ 2\pi V(0) +  \int_{[-\pi,\pi]\setminus{(\alpha,\beta)}} |V(re^{i\theta})|\mathrm{d}\theta
\\
&=O(1), \qquad r\to 1^{-},
\end{align*}
where in the third line we have used that $\Phi(re^{i\theta})\to \Phi(ie^{\theta})$ and $(\Phi(re^{i\theta}))'\to (\Phi(ie^{\theta}))'$ uniformly on $[\alpha,\beta]$. This shows that $V\in h^{1}(\mathbb{D})$, also under (B.1).
\end{proof}

We can now establish our original claim:
\begin{claim}
\label{KWIClaim2} $U$ admits a boundary distribution on a boundary neighborhood of 1.
\end{claim}
\begin{proof}
In view of  \cite[Theorem~1.1, p.~2]{durenBook} and Claim \ref{KWIClaim1}, the harmonic function $V$ is a Poisson-Stieltjes integral, whence we readily obtain the bound
\[
V(z)=O\left(\frac{1}{1-|z|}\right), \qquad |z|<1.
\]
Since $\Phi$ extends to a diffeomorphism between complex neighborhoods of $J$ and $1+i[-\lambda,\lambda]$, we also have 
\[
U(\sigma+it)=O\left(\frac{1}{\sigma-1}\right), \qquad  \sigma+it \in (1,2]\times [-\lambda,\lambda].
\]
The latter bound yields the claim (see Section \ref{KWI Tauberian lemmas}).
\end{proof}

We can now move to the second part of proof. Let  
$$u(t)=\lim_{\sigma\to 1^{+}}U(\sigma+it) \qquad \mbox{in } \mathcal{D}'(\mathbb{R}).$$ By assumption $u=f$ on $\mathbb{R}\setminus\{0\}$ with $f\in L^{1}_{loc}(\mathbb{R}\setminus\{0\})$. If (B.1) holds, then $u-g$ should be a non-negative measure on $(-\lambda,\lambda)$. When (B.2) is satisfied, $u$ is a measure on $(-\lambda,\lambda)$, as follows from the Banach-Alaoglu theorem (or Helly's selection principle as better known in the Lebesgue-Stieltjes measure context), because (B.2) tells us that $\{U(\sigma+ i\:\cdot\:)\: : 1<\sigma<2\}$ is bounded in the dual of $C[-\lambda,\lambda]$. Summarizing, in every case we have shown that the distribution $u$ is a Radon measure on $\mathbb{R}$. Using the Lebesgue decomposition of $u$ (Lebesgue-Radon-Nikodym theorem \cite[p.~121]{rudinRC1987}), we conclude that $f\in L^{1}_{loc}(\mathbb{R})$ and that $f$ is its absolutely continuos part, while its singular part must have point support  at 0. Hence, $u=f+\pi a \delta$ for some constant $a\in\mathbb{R}$, where as usual $\delta$ stands for the Dirac delta distribution. Finally, using the well-known formula (cf. \cite[Eq.~(2.17), p.~58]{estrada-kanwalBook}) 
\[\lim_{\sigma\to 1^{+}} \frac{1}{\sigma-1+it}= \frac{-i}{t-i0}= \pi\delta(t)-i\: \mathrm{p.v.}\left(\frac{1}{t}\right),\] we deduce from Corollary \ref{KWI exact theorem} that $S(x)\sim a e^{x}$ as $x\to\infty$, because \eqref{LaplacePoleq} has boundary value distribution $f\in L^{1}_{loc}(\mathbb{R})\subset PF_{loc}(\mathbb{R})$ . This completes the proof of Theorem \ref{KWIthmain}.

The proof of Theorem \ref{KWIthStieltjes} is exactly the same as the one we just gave, but now making use of Remark \ref{KWI r1}.

\section{The power series case: proof of Theorem \ref{KWIthpowerseries}}
\label{KWI power series section}

The proof of Theorem \ref{KWIthpowerseries} is similar to that of Theorem \ref{KWIthmain}, but simpler since we can avoid using Corollary \ref{KWI exact theorem} via a more direct argument. It suffices to show that $\{c_n\}_{n=0}^{\infty}$ converges to some finite limit, because then necessarily \eqref{KWI eq limit} should hold due to Abel's classical limit theorem for power series.  As in Section \ref{KWI section proof Laplace transform versions}, the assumptions imply that $U(re^{i\theta})$ converges distributionally to a boundary measure, which is absolutely continuous with respect to the Lebesgue measure off $2\pi \mathbb{Z}$. Therefore, there are $f\in L^{1}[-\pi,\pi]$ and $a\in\mathbb{R}$ such that 
$$
\lim_{r\to{1}^{-}} U(re^{i\theta})= a\pi  \delta(\theta)+f(\theta)
$$
in, say, the dual of $C^\infty[-\pi,\pi].$
Now,
\begin{align*}
c_nr^{n}&= \frac{1}{2\pi} \int_{-\pi}^{\pi} F(r{e}^{i\theta}) e^{-in\theta}\mathrm{d}\theta= \frac{1}{2\pi} \int_{-\pi}^{\pi} F(r{e}^{i\theta}) (e^{-in\theta}+e^{in\theta})\mathrm{d}\theta \\
&=\frac{1}{\pi} \int_{-\pi}^{\pi} U(r{e}^{i\theta}) \cos n\theta\: \mathrm{d}\theta,
\end{align*}
where we have used that $\{c_k\}_{k=0}^{\infty}$ is real.
Taking $r\to1^{-}$,
$$
c_n= a+\frac{1}{\pi} \int_{-\pi}^{\pi} f(\theta)\cos n\theta\: \mathrm{d}\theta= a+o(1) \qquad \mbox{as }n\to\infty,
$$
in view of the Riemann-Lebesgue lemma.

\section{Proof of Corollary \ref{KWIthIK}} \label{Section proof of corollary}

We set $S(x)=\int_{0^{-}}^{x}e^{u}\mathrm{d}Q(u)$. Using that $Q(x)+Mx$ is non-decreasing, we see that $S$ is log-linearly slowly decreasing; in fact, for $y\in[x,x+h]$,
\begin{equation*}
\frac{S(y)-S(x)}{e^x}= e^{-x}\int_{x^{+}}^{y} e^{u} (\mathrm{d}Q(u)+ M\mathrm{d}u)- M e^{-x}\int_{x}^{y} e^u\mathrm{d}u\geq M(1-e^{h})=o(1),
\end{equation*}
 $h\to0^{+}$. The hypotheses (B$_0$.1) and (B$_0$.2) translate into the conditions (B.1) and (B.2), respectively, for the real part of the Laplace-Stieljes transform of $S$. Theorem \ref{KWIthStieltjes} then yields $S(x)\sim a e^{x}$ with $a$ given by \eqref{KWI eq limit Laplace IK}. Writing $\tau(x)=e^{-x}S(x)-a=o(1)$, noticing that $\mathrm{d}Q(x)= e^{-x}\mathrm{d}S(x)$, and integrating by parts, we obtain,
$$
Q(x)= a(x+1)+ \tau(x)+ \int_{0}^{x}\tau(u)\mathrm{d}u = ax +a +\int_{0}^{x}\tau(u)\mathrm{d}u+ o(1).
$$
The asymptotic formula \eqref{KWI conclusion IK} now follows upon redefining $\tau$ on a finite interval so that the constant $a$ gets absorbed into its integral on such an interval.

\section{Concluding remarks}\label{section KWI concluding remarks}
We end this article with some remarks.

\begin{remark}
\label{KWI rm 6.1} Theorem \ref{thKoga} is also directly covered by (B.2). In fact, suppose that  \eqref{KWI eq bd} and
 \eqref{Koga condition 1} are satisfied. Let $\varphi$ be a real-valued even non-negative smooth function with support on $(-\lambda, \lambda)$ such that $\varphi(t)=1$ for $t\in[-\lambda/2,\lambda/2]$. Let $\widehat{\varphi}(x)=\int_{-\infty}^{\infty} \varphi(t)e^{-it x}\mathrm{d}x$, so that $\widehat{\varphi}$ is a Schwartz function. Then, since $U(\sigma+it)-g(t)\geq 0$ in the considered range,
\begin{align*}
\int_{-\lambda/2}^{\lambda/2} |U(\sigma+it )|\mathrm{d}t
&
\leq \int_{-\lambda/2}^{\lambda/2} |g(t)|\mathrm{d}t +\int_{-\infty}^{\infty} (U(\sigma+it )-g(t))\varphi(t)\mathrm{d}t
\\
&
\leq 2 \int_{\infty}^{\infty} |g(t)|\varphi (t)\mathrm{d}t +\int_{-\infty}^{\infty} \mathcal{L}\{S;(\sigma+it )\}\varphi(t)\mathrm{d}t
\\
&
= 2 \int_{\infty}^{\infty} |g(t)|\varphi (t)\mathrm{d}t+\int_{0}^{\infty} e^{-x}S(x)e^{-(\sigma-1)x}\widehat{\varphi}(x)\mathrm{d}x\\
&
\leq 2 \int_{\infty}^{\infty} |g(t)|\varphi (t)\mathrm{d}t+ \int_{0}^{\infty} e^{-x}|S(x)||\widehat{\varphi}(x)|\mathrm{d}x<\infty.
\end{align*}
\end{remark}

\begin{remark}
\label{KWI rm 6.2} Koga originally stated his Tauberian theorem (cf. \cite[Theorem 2]{Koga2021}) by only imposing the boundary requirements for the Laplace transform on a sequence tending to $1^{+}$. More precisely, in addition to \eqref{Koga condition 1}, he assumes\footnote{His formulation of \eqref{L^1 boundary eq Koga} in \cite[Theorem 2]{Koga2021}  is slightly different, but equivalent in view of the well known completeness of the $L^1$-spaces.} the existence of $\sigma_n\to 1^{+}$ such that
\begin{equation}
\label{L^1 boundary eq Koga}
\lim_{n\to\infty} \int_{I} |U(\sigma_n+it)-f(t)|\mathrm{d}t=0, 
\end{equation}
for some  $f\in L^{1}_{loc}(\mathbb{R}\setminus\{0\})$ and any finite interval $I$ not containing the origin, and
\begin{equation}
\label{KWI eq bd Koga}
U(\sigma_n+it)\geq g(t), \qquad \mbox{for a.e. } t\in (-\lambda,\lambda) \mbox{ and } n\in\mathbb{N},\end{equation}
for some $\lambda>0$ and  $g\in L^{1}(-\lambda,\lambda) $. 

We have however that \eqref{L^1 boundary eq Koga} is equivalent to \eqref{L^1 boundary eq} in our case. In fact, since  \eqref{Koga condition 1}, which allows us to view $e^{-x}S(x)$ as a tempered distribution, ensures \cite[Section~6.6.9, p.~100]{vladimirovbook} 
that $\mathcal{L}\{S;s\}$ has distributional boundary values on $\Re e\: s>1$, the relation \eqref{L^1 boundary eq} might be inferred from \eqref{L^1 boundary eq Koga} by using a standard localization argument together with the (distributional) Schwarz reflection principle \cite{rudin1971} (see also \cite{Debrouwere-Vindas2023,Langenbruch1978} for generalized reflection principles). Likewise, the condition \eqref{KWI eq bd Koga} follows from  \eqref{KWI eq bd} and \eqref{Koga condition 1} (where one might need to subtract a constant from $g$ resulting from the application of the reflection principle). Alternatively, one can also see that Koga's original set of hypotheses is covered by Theorem \ref{KWIthmain} via the same argument employed in Remark \ref{KWI rm 6.1}, which clearly yields, say, 
$$
\sup_{n\in\mathbb{N}}\int_{-2\lambda/3}^{2\lambda/3} |U(\sigma_n+it)|\mathrm{d}t<\infty.
$$
The latter condition in turn  implies (via localization and the reflection principle once more) that (B.2) holds with $\lambda$ replaced by, say, $\lambda/2$. 
\end{remark}

\begin{remark}
\label{KWI rm 6.3}
As Koga, we could also have only assumed in Theorem \ref{KWIthmain} and Theorem \ref{KWIthStieltjes} that \eqref{KWI eq bd} just holds on a sequence $\sigma=\sigma_n\to1^{+}$. Exactly the same argument given in Section \ref{KWI section proof Laplace transform versions} would then still yield that the boundary distribution of $U$ is a Radon measure. This comment also applies to Corollary \ref{KWIthIK} and Theorem \ref{KWIthpowerseries}.
\end{remark}

\begin{remark}
\label{KWI rm 6.4}
Sometimes one is just interested in deducing an upper bound
\begin{equation}
\label{KWI eq 1 bdd}
S(x)=O(e^x)
\end{equation}
from relatively mild regularity boundary properties of the Laplace transform. For instance, such criteria play an important role in abstract analytic number theory (see e.g. \cite[Chapter 11]{diamond-zhangbook} for applications to Beurling primes).
The following extension of \cite[Proposition 3.1]{D-V2016} (cf. \cite[Theorem 10.1]{diamond-zhangbook})  could then be useful in that respect. We call $S$ \emph{log-linearly boundedly decreasing} if there is $h>0$ such that
$$
\liminf_{x\to\infty} \inf_{y\in [x,x+h]} \frac{S(y)-S(x)}{e^{x}}> -\infty.
$$
We also use the notation $A_{c}(\mathbb{R})$ for the subspace of compactly supported elements of the Wiener algebra, namely, those compactly supported continuous functions such that their Fourier transforms belong to $L^{1}(\mathbb{R})$.

\begin{proposition}
\label{KWI boundedness proposition} Let $S\in L^{1}_{loc}[0,\infty)$ and let $\varphi\in A_{c}(\mathbb{R})\setminus\{0\}$ be even real-valued and have non-negative Fourier transform. Then, \eqref{KWI eq 1 bdd} holds if and only if $S$ is log-linearly boundedly decreasing, has convergent Laplace transform on $\Re e\: s>1$, and there is a sequence $\sigma_n\to1^{+}$ such that
\begin{equation*}
\mathfrak{I}_{\varphi}(h)=\lim_{n\to\infty} \int_{-\infty}^{\infty}\left( \Re e\mathcal{L}\{S;\sigma_n+it\}\right) \varphi(t)\cos ht\:\mathrm{d} t 
\end{equation*}
exists for each $h>0$ and $\mathfrak{I}_{\varphi}(h)=O(1)$ as $h\to\infty$.
\end{proposition}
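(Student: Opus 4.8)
The plan is to mimic the structure of the proof of Theorem \ref{KWIthmain}, but to replace the use of Corollary \ref{KWI exact theorem} by the cruder Tauberian criterion underlying \cite[Proposition 3.1]{D-V2016}, which only detects the order-one growth $S(x)=O(e^{x})$ rather than an asymptotic equivalence. One direction is routine: if $S(x)=O(e^{x})$, then $\mathcal{L}\{S;s\}$ converges on $\Re e\: s>1$, the Tauberian hypothesis on $S$ is immediate, and the family $\{e^{-x}S(x)e^{-(\sigma-1)x}\}_{1<\sigma<2}$ is bounded in (a weighted) $L^{1}$, so that for $\varphi\in A_{c}(\mathbb{R})$ the quantity $\int (\Re e\:\mathcal{L}\{S;\sigma+it\})\varphi(t)\cos ht\,\mathrm{d}t$ equals $\int_{0}^{\infty} e^{-x}S(x)e^{-(\sigma-1)x}\cdot\tfrac12(\widehat{\varphi}(x-h)+\widehat{\varphi}(x+h))\,\mathrm{d}x$, which, since $\widehat{\varphi}\in L^{1}(\mathbb{R})$ and $\widehat{\varphi}\geq 0$, is $O(\|\widehat\varphi\|_{1})$ uniformly in $h>0$ and $1<\sigma<2$; taking any sequence $\sigma_{n}\to 1^{+}$ along which the integrals converge (the boundedness and a diagonal/weak-$*$ argument guarantee one can even extract such a sequence, or one simply invokes that $\mathcal L\{S;s\}$ already has a distributional boundary value so the full limit exists) gives the stated $\mathfrak{I}_{\varphi}(h)=O(1)$.

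For the substantive direction I would argue as follows. First, from the convergence of $\mathcal{L}\{S;s\}$ on $\Re e\: s>1$ and the existence of the limits $\mathfrak{I}_\varphi(h)$, I want to conclude that the measure-valued ``boundary object'' associated with $e^{-x}S(x)$ via $\varphi$ is tame enough to run the Ingham--Karamata / bounded-decrease Tauberian machinery. Concretely, set $T(x)=e^{-x}S(x)$ and consider, for the fixed $\varphi$, the expression $\mathfrak{I}_\varphi(h)=\tfrac12\lim_{n}\int_0^\infty T(x)e^{-(\sigma_n-1)x}(\widehat\varphi(x-h)+\widehat\varphi(x+h))\,\mathrm{d}x$. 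Because $\varphi$ is even real-valued with $\widehat\varphi\geq 0$ and $\widehat\varphi\not\equiv 0$, the kernel $\widehat\varphi(x-h)+\widehat\varphi(x+h)$ is, for large $h$, essentially a nonnegative bump of fixed mass centred at $x=h$. The hypothesis $\mathfrak{I}_\varphi(h)=O(1)$ therefore says precisely that these local averages of $T$ stay bounded, and combined with log-linear bounded decrease this should force $T(x)=O(1)$. I would make this rigorous exactly as in \cite[Proposition 3.1]{D-V2016}: a one-sided Tauberian argument, using the lower bound $T(y)-T(x)\geq -Ce^{-x}\cdot e^{x}=-C$ on intervals $[x,x+h_0]$ coming from bounded decrease (after absorbing the $e^{x}/e^{x}$ factor correctly) together with the boundedness of the smoothed averages, yields a uniform bound on $\limsup_{x\to\infty} T(x)$ and then, separately, controls $\liminf$, giving \eqref{KWI eq 1 bdd}.

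The key auxiliary point — and the place where the hypotheses on $\varphi$ (membership in $A_c(\mathbb R)$, evenness, nonnegativity of $\widehat\varphi$, nontriviality) all get used — is that a single such $\varphi$ already ``tests'' enough: nonnegativity of $\widehat\varphi$ is what makes the smoothed quantity a genuine majorant/minorant for local behaviour of $T$, compact support of $\varphi$ is what makes $\widehat\varphi$ a Schwartz-type (at least $L^1$, rapidly decaying) function so the tails are negligible, and $\widehat\varphi\not\equiv 0$ ensures the bump has positive mass. I would also need the elementary identity that $\int (\Re e\:\mathcal L\{S;s\})\varphi(t)\cos ht\,\mathrm dt$ equals the Laplace-side integral above; this is just Parseval/Fubini and is the same computation performed in Remark \ref{KWI rm 6.1}. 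The main obstacle I anticipate is organizing the one-sided Tauberian step cleanly when only a \emph{sequence} $\sigma_n\to 1^+$ is available rather than the full limit, and when only \emph{one} cutoff $\varphi$ is given: one must show that controlling $\mathfrak I_\varphi(h)$ for this particular $\varphi$ is not weaker than controlling it for a whole approximate identity, which is where the positivity of $\widehat\varphi$ is indispensable and where I expect the bulk of the technical work (and the appeal to \cite[Proposition 3.1]{D-V2016} and \cite[Theorem 10.1]{diamond-zhangbook}) to lie. Passing from the $L^1$-boundary behaviour of $\Re e\:\mathcal L\{S;s\}$ away from $s=1$ — which is \emph{not} assumed here, unlike in Theorem \ref{KWIthmain} — is simply not needed, since we are after a boundedness conclusion and not an asymptotic one; this is what lets the proof be self-contained and avoid Corollary \ref{KWI exact theorem} entirely.
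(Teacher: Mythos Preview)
Your approach is essentially the same as the paper's: both reduce to showing $(\Delta\ast\widehat{\varphi})(h)=O(1)$ for $\Delta(x)=e^{-x}S(x)$ via the Parseval computation, and both invoke the Tauberian step from \cite[Proposition~3.1]{D-V2016} to pass from this convolution bound and log-linear bounded decrease to $\Delta(x)=O(1)$.

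The one point worth flagging is your anticipated ``main obstacle'' --- passing the limit along the sequence $\sigma_n\to1^{+}$ when only a single $\varphi$ is available. The paper dispatches this in one line by first noting (again from inside the proof of \cite[Proposition~3.1]{D-V2016}) that one may assume $\Delta\geq 0$ without loss of generality. Once $\Delta\geq 0$ and $\widehat{\varphi}\geq 0$, the integrands $e^{-(\sigma_n-1)x}\Delta(x)\widehat{\varphi}(x-h)$ increase monotonically as $\sigma_n\downarrow 1$, so Beppo Levi (monotone convergence) gives $(\Delta\ast\widehat{\varphi})(h)\leq 2\,\mathfrak{I}_{\varphi}(h)=O(1)$ immediately, with no need for a separate $\limsup/\liminf$ organization or any weak-$*$ extraction. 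You gesture at the right ingredients but do not isolate this reduction, which is precisely what makes the sequence hypothesis harmless.
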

\begin{proof} That the conditions are necessary is easy to verify. Let us show their sufficiency for \eqref{KWI eq 1 bdd}. 
Set $\widehat{\varphi}(x)=\int_{-\infty}^{\infty} \varphi(t)e^{-it x}\mathrm{d}x$ and $\Delta(x)=e^{-x}S(x)$. As shown inside the proof of \cite[Proposition 3.1]{D-V2016}, $\Delta(x)=O(1)$ as $x\to\infty$ would follow from the log-linear bounded decrease if we show that $(\Delta \ast \widehat{\varphi})(h)=O(1)$ as $h\to\infty$. Also, it is shown there that we may assume without any loss of generality that $\Delta\geq 0$. Thus (the use of Parseval's relation is justified by \cite[Lemma 3.4]{D-V2016})
\begin{align*}0\leq \int_{0}^{\infty} e^{(1-\sigma_n) x}\Delta(x) \widehat{\varphi}(x-h)\mathrm{d}x&\leq \int_{0}^{\infty} e^{(1-\sigma_n) x}\Delta(x) \left((\widehat{\varphi}(x-h)+\widehat{\varphi}(x+h)\right) \mathrm{d}x 
		\\
		&= \int_{-\infty}^{\infty}\mathcal{L}\{S;\sigma_n+it\} \varphi(t) \left(e^{iht}+e^{-iht}\right)\mathrm{d} t
		\\
		&= 2\int_{-\infty}^{\infty} \Re e\mathcal{L}\{S;\sigma_n+it\} \: \varphi(t)\cos ht\: \mathrm{d} t.
\end{align*}
Applying the Beppo Levi theorem, $
0\leq  (\Delta \ast \widehat{\varphi})(h)\leq 2 \mathfrak{I}_{\varphi}(h)=O(1)$, $h\to\infty$.
\end{proof}
For example, Proposition \ref{KWI boundedness proposition} could have been used in Section \ref{KWI section proof Laplace transform versions} to directly show that $U(s)$ has distributional boundary values under (B.2) without having to pass through the conformal map argument, because once $T(x)=e^{-x}S(x)=O(1)$, the Laplace transform $\mathcal{L}\{S;s\}$ tends to the (distributional) Fourier transform of $T$ on $\Re e\: s=1$.
\end{remark}


\begin{thebibliography}{99}  

\bibitem{B-B-T2016}C.~J.~K.~Batty, A.~Borichev, Y.~Tomilov, \emph{$L^p$-tauberian theorems and $L^p$-rates for energy decay,}
J. Funct. Anal. \textbf{270} (2016), 1153--1201.



\bibitem{Blackwell} D.~Blackwell, \emph{A renewal theorem,} Duke Math. J. \textbf{15} (1948), 145--150.



\bibitem{B-D-V2021} F.~Broucke, G.~Debruyne, J.~Vindas, \emph{On the absence of remainders in the Wiener-Ikehara and Ingham-Karamata theorems: a constructive approach,} Proc. Amer. Math. Soc. \textbf{149} (2021), 1053--1060.

\bibitem{C-S2016}R.~Chill, D.~Seifert, \emph{Quantified versions of Ingham’s theorem,} Bull. Lond. Math. Soc. \textbf{48} (2016), 519--532.


\bibitem{Debrouwere-Vindas2023} A.~Debrouwere, J.~Vindas, \emph{Quasianalytic functionals and ultradistributions as boundary values of harmonic functions,} Publ. Res. Inst. Math. Sci. \textbf{59} (2023), 657--686.


\bibitem{D-V2016} G.~Debruyne, J.~Vindas, \emph{Generalization of the Wiener-Ikehara theorem,} Illinois J. Math. \textbf{60} (2016), 613--624.



\bibitem{Debruyne-VindasCT} G.~Debruyne, J.~Vindas, \emph{Complex Tauberian theorems for Laplace transforms with local pseudofunction boundary behavior,} J. Anal. Math. \textbf{138} (2019), 799--833.

\bibitem{diamond-zhangbook} H.~G.~Diamond, W.-B.~Zhang, \emph{Beurling
    generalized numbers,}  Mathematical Surveys and Monographs series,
 American Mathematical Society, Providence, RI, 2016. 
  
 \bibitem{EFP1949} P.~Erd\H{o}s, W.~Feller, H.~Pollard, \emph{A theorem on power series,}  Bull. Amer. Math. Soc. \textbf{55} (1949), 201--204.
  
  
\bibitem{durenBook}  P.~L.~Duren, \emph{Theory of $H^p$ spaces,} Academic Press, New York-London, 1970.
  
 \bibitem{estrada-kanwal1982} R.~Estrada, R.~P.~Kanwal, \emph{Distributional boundary values of harmonic and analytic functions,} J. Math. Anal. Appl. \textbf{89} (1982), 262--289. 
  
  \bibitem{estrada-kanwalBook} R.~Estrada, R.~P.~Kanwal, \emph{A distributional approach to asymptotics. Theory and applications,} Birkh\"{a}user, Boston, 2002.
  
\bibitem{FellerBook1}  W.~Feller, \emph{An introduction to probability theory and its applications. Vol. I} John Wiley \& Sons, Inc., New York-London-Sydney, 1968.
  
 \bibitem{FellerBook2} W.~Feller, \emph{An introduction to probability theory and its applications. Vol. II,} John Wiley \& Sons, Inc., New York-London-Sydney, 1971.

\bibitem{FT2019}D.~Finkelshtein, P.~Tkachov, \emph{An Ikehara-type theorem for functions convergent to zero}, C. R. Math. Acad. Sci. Paris \textbf{357} (2019), 333--338.

\bibitem{Katznelsonbook} Y.~Katznelson, \emph{An introduction to harmonic analysis,} Cambridge University Press,
Cambridge, 2004.

\bibitem{Koga2021} T.~Koga, \emph{A Tauberian theorem of Wiener-Ikehara type and its applications,} J. Fourier Anal. Appl. \textbf{27} (2021), Article No. 18, 21 pp. 

\bibitem{Kolmogorov1936} A.~Kolmogorov, \emph{Anfangsgr\"{u}nde der Theorie der Markoffschen Ketten mit unendlich vielen m\"{o}glichen Zust\"{a}nden}, Mat. Sbornik N.S. \textbf{1} (1936), 607--610.
	
\bibitem{korevaarbook} J.~Korevaar, \textit{Tauberian theory. A century of developments}, Springer-Verlag, Berlin, 2004.

\bibitem{korevaar2005} J.~Korevaar, \textit{Distributional Wiener-Ikehara theorem and twin primes,} Indag. Math. (N.S.) \textbf{16} (2005), 37--49.

\bibitem{kunisada2025} R.~Kunisada, \textit{On almost convergence on locally compact Abelian groups,} J.~Fourier~Anal.~Appl. \textbf{31} (2025), Article No. 5, 32 pp.

\bibitem{Landaubook1953} E.~Landau, \emph{Handbuch der Lehre von der Verteilung der Primzahlen, 2 B\"{a}nde} (second ed. with an appendix by Paul T. Bateman), Chelsea Publishing Co., New York, 1953.

\bibitem{Langenbruch1978} M. Langenbruch, \emph{Randverteilungen von Null\"{o}sungen hypoelliptischer Differentialgleichungen,} Manuscripta Math. \textbf{26} (1978), 17--35.

\bibitem{MSJ2024} M.~R.~Murty, J.~Sahoo, A.~Vatwani, \emph{A simple proof of the Wiener-Ikehara Tauberian theorem,} Expo. Math.  \textbf{42} (2024), Article 125570, 11 pp.

\bibitem{PommerenkeBook} Ch.~Pommerenke, \emph{Boundary behaviour of conformal maps,} Springer-Verlag, Berlin, 1992.

\bibitem{revesz-roton} Sz.~Gy.~R\'{e}v\'{e}sz, A. de Roton, \emph{Generalization of the effective Wiener-Ikehara theorem,} Int. J. Number Theory \textbf{9} (2013), 2091--2128. 

\bibitem{rudin1971} W.~Rudin, \emph{Lectures on the edge-of-the-wedge theorem}, Conference Board of the Mathematical Sciences Regional Conference Series in Mathematics, No. 6, AMS, Providence, RI, 1971.

\bibitem{rudinRC1987} W.~Rudin, \emph{Real and complex analysis}, third ed., McGraw-Hill Book Co., New York, 1987.

\bibitem{Seifert2016} D.~Seifert, \emph{Rates of decay in the classical Katznelson–Tzafriri theorem,} J. Anal. Math. \textbf{130} (2016), 329--354.

\bibitem{TenenbaumBook} G.~Tenenbaum, \emph{Introduction to analytic and probabilistic number theory,} American Mathematical Society, Providence, RI, 2015.

\bibitem{vladimirovbook} V.~S.~Vladimirov, \textit{Methods of the theory of generalized functions,} Taylor \& Francis, London, 2002.

\bibitem{Wienerbook}  N.~Wiener, \emph{The Fourier integral and certain of its applications} (reprint of the 1933 edition),   Cambridge University Press, Cambridge, 1988.


\bibitem{zhang2019} W.-B.~Zhang, \emph{Exact Wiener-Ikehara theorems,} Acta. Arith. \textbf{187} (2019), 357--380.




\end{thebibliography}
\end{document}